\theoremstyle{theorem}
\newtheorem{theorem}{Theorem}[section]
\newtheorem{lemma}[theorem]{Lemma}
\newtheorem{corollary}[theorem]{Corollary}
\newtheorem{definition}[theorem]{Definition}
\theoremstyle{definition}
\newtheorem{example}[theorem]{Example}
\renewenvironment{proof}[1][\proofname]{\noindent \textbf{Proof. }}
{%
  \qed\endtrivlist
}
\author{A.V.  Scerbacova and V.A. Shcherbacov }
\title{About spectrum  of   $T_2$-quasigroups}
\begin{document}

\maketitle

\begin{abstract}
\noindent
We give information about some properties and spectrum of quasigroups with the following identity  $x(y \cdot yx) = y$.

\smallskip

\noindent \textbf{2000 Mathematics Subject Classification:} 20N05, 05B15

\smallskip

\noindent \textbf{Key words and phrases:} quasigroup, medial quasigroup, $T_2$-quasigroup, spectrum
\end{abstract}

\section{Introduction}
Definitions and elementary properties of quasigroups can be found in \cite{VD, 1a, HOP}.

V. D. Belousov \cite{BELPAR, BELPAR_05} by the study of orthogonality of quasigroup parastrophes  proved that there exists exactly seven   parastrophically   non-equivalent  identities which guarantee that a quasigroup is orthogonal to at least one its parastrophe:
\begin{align}
&  x(x \cdot xy) = y \, &&(C_3 \, \textrm{law}) \label{identity_$C_3$}\\
&  x(y \cdot yx) = y  \, &&\textrm{of type} \, T_2 \, \cite{BELPAR} \label{T_2} \\
&  x \cdot xy = yx \, &&\textrm{(Stein's 1st law)} \label{Steins 1st}\\
&  xy \cdot x = y \cdot xy \, &&\textrm{(Stein's 2nd law)} \label{identity_Stein 2nd}\\
&  xy \cdot yx = y \, &&\textrm{(Stein's 3nd law)} \label{identity_Stein's 3rd}\\
&  xy \cdot y = x \cdot xy \, &&\textrm{(Schroder's 1st law)} \label{identity_Schroder's 1st}\\
&  yx \cdot xy = y \, &&\textrm{(Schroder's 2nd law)}.  \label{Schroders 2nd}
\end{align}

The names of identities (\ref{Steins 1st})--(\ref{Schroders 2nd})  originate from Sade's paper \cite{AS_57}.
We follow \cite{BENNETT_89} in the calling  of identity (\ref{identity_$C_3$}).
All these identities can be  obtained in a unified way using criteria of orthogonality and quasigroup translations  \cite{MS05}. For example, identity (\ref{T_2}), that guarantee orthogonality of a quasigroup $(Q, \cdot)$ and its $(2\,3)$-parastrophe,  can be obtained from the following translation equality $L^2_y x = P_y x$.

A quasigroup $(Q, \cdot)$  with identity $x\cdot x = x$ is called idempotent. The set $\frak Q$ of natural numbers for which there exist quasigroups with a property $T$, for example, the property of idempotency, is called the spectrum of the property $T$ in the class of quasigroups. Often it is used the following phrase: spectrum  of quasigroups with a property $T$.  Therefore we can say that spectra of quasigroups with identities (\ref{Steins 1st})--(\ref{Schroders 2nd})  were studied in  \cite{CCL_80, BENNETT_89, Pelling_78, Pasha_14}.

\section{Medial $T_2$-quasigroups}

The problem of the study of $T_2$-quasigroups is posed in \cite{BELPAR, BELPAR_05}. In \cite{Pasha_09}
the following proposition (Proposition 7) is proved. We formulate this proposition in slightly changed form.

\begin{theorem}
 If a $T_2$-quasigroup $(Q, \cdot)$  is isotopic to an abelian
group $(Q, \oplus)$, then for every element $b\in Q$  there exists an isomorphic copy $(Q, +)  \cong (Q, \oplus)$
such that $x \cdot  y = IL^3_b(x) + L_b(y) + b$, for all $x,  y \in  Q$, where $x + Ix = 0$ for all $x\in Q$.
\end{theorem}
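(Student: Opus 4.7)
\medskip

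\noindent \textbf{Proof plan.} The plan is to build the requested abelian group structure on $Q$ via the principal isotope of $(Q,\cdot)$ at the pair $(b,b)$. Concretely, I would define $x+y := R_b^{-1}(x)\cdot L_b^{-1}(y)$, where $R_b,L_b$ denote the right and left translations by $b$ in $(Q,\cdot)$. A routine verification shows $(Q,+)$ is a loop with neutral element $b\cdot b$, and by construction the original operation decomposes as
\[
x\cdot y \;=\; R_b(x) + L_b(y).
\]

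Next I would identify $(Q,+)$ as the isomorphic copy of $(Q,\oplus)$ claimed in the statement. Being a principal isotope of $(Q,\cdot)$, the loop $(Q,+)$ is itself isotopic to the abelian group $(Q,\oplus)$. Albert's classical theorem --- every loop isotopic to a group is isomorphic to that group --- then forces $(Q,+)\cong(Q,\oplus)$; in particular $(Q,+)$ is an abelian group, and the inversion $I$ is well defined on it.

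It remains to recognise $R_b$. I would specialise the $T_2$-identity $x(y\cdot yx)=y$ to $y=b$, obtaining $x(b\cdot bx)=b$, i.e.\ $x\cdot L_b^2(x)=b$. Translating the left-hand side via the decomposition above gives
\[
R_b(x) + L_b\!\left(L_b^2(x)\right) \;=\; R_b(x) + L_b^3(x) \;=\; b.
\]
Since $(Q,+)$ is abelian, this rearranges to $R_b(x) = IL_b^3(x) + b$. Substituting back into $x\cdot y = R_b(x)+L_b(y)$ produces $x\cdot y = IL_b^3(x) + L_b(y) + b$, which is exactly the asserted formula.

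The delicate ingredient is the second step: upgrading the bare hypothesis ``$(Q,\cdot)$ is isotopic to an abelian group'' into the concrete statement that the specific principal isotope at $(b,b)$ is itself isomorphic to $(Q,\oplus)$. Once Albert's theorem is invoked to legitimise addition and inversion on the set $Q$, the $T_2$-identity at $y=b$ pins down $R_b$ in essentially one line, and the rest of the argument is bookkeeping for the LP-isotope.
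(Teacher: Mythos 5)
Your proof is correct. Note that the paper itself does not prove this statement --- it is quoted (in ``slightly changed form'') from Proposition~7 of Syrbu's 2009 paper cited as \cite{Pasha_09} --- so there is no in-paper argument to compare against. Your route (the LP-isotope $x+y=R_b^{-1}(x)\cdot L_b^{-1}(y)$ with neutral element $b\cdot b$, Albert's theorem to get $(Q,+)\cong(Q,\oplus)$, and the specialisation $y=b$ of the $T_2$-identity to obtain $R_b=IL_b^3+b$) is the standard one for results of this type and all of its steps check out; for instance, on the quasigroup $x\cdot y=6x+4y$ over $Z_7$ with $b=1$ it reproduces the stated form exactly.
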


\begin{definition}
A quasigroup $(Q, \cdot)$ of the form $x\cdot y = \varphi x + \psi y + b$, where $(Q, +)$ is an abelian group, $\varphi, \psi$ are automorphisms of the group $(Q, +)$,  $b$ is a fixed element of the set $Q$ is called $T$-quasigroup. If, additionally, $\varphi \psi = \psi \varphi$, then  $(Q, \cdot)$ is called  medial quasigroup \cite{pntk, VD, HOP, 1a}.
\end{definition}

\begin{theorem} \label{T_2-T_quas}
A T-quasigroup $(Q, \cdot)$ of the form
\begin{equation} \label{Form_of_T_quasigroup}
x\cdot y = \varphi x + \psi y + b
\end{equation}
 satisfies  $T_2$-identity if and only if $\varphi = I\psi^3$, $\psi^5   + \psi^4   + 1 = 0$, where $1$ is identity automorphism of the group $(Q, +)$ and $0$ is zero endomorphism of this group, $\psi^2 b + \psi b + b = 0$.
\end{theorem}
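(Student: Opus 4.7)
The plan is a direct substitute-and-compare computation in the endomorphism ring of $(Q,+)$. Since $(Q,\cdot)$ is given in the form $x\cdot y = \varphi x + \psi y + b$, I would expand $x(y\cdot yx)$ inside out: first compute $y\cdot x = \varphi y + \psi x + b$, then apply $L_y$ to get
$$ y\cdot(yx) = \varphi y + \psi\varphi y + \psi^2 x + \psi b + b, $$
and finally apply $L_x$ from the left to obtain
$$ x(y\cdot yx) = \varphi x + \psi\varphi y + \psi^2\varphi y + \psi^3 x + \psi^2 b + \psi b + b. $$
The $T_2$-identity $x(y\cdot yx)=y$ then becomes a single equation in $x$, $y$, and $b$ that must hold for all $x,y\in Q$.

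Next I would extract the three claimed conditions by separating the equation into its ``$x$-part'', its ``$y$-part'' and its constant part. Setting $x=y=0$ isolates the constant relation $\psi^2b+\psi b+b=0$. Specialising to $y=0$ (and using that this must hold for every $x$, together with the fact that $(Q,+)$ is a group and $\varphi,\psi$ are automorphisms) forces $\varphi+\psi^3=0$, i.e.\ $\varphi = I\psi^3$. Specialising to $x=0$ forces $(\psi+\psi^2)\varphi = 1$, and substituting the just-obtained expression $\varphi=-\psi^3$ gives $-\psi^4-\psi^5 = 1$, equivalently $\psi^5+\psi^4+1=0$. Note that, as a by-product, $\varphi=I\psi^3$ is a polynomial in $\psi$, so $\varphi$ and $\psi$ automatically commute; hence such a $T_2$ $T$-quasigroup is in fact medial, which fits the framing of the section.

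For the converse direction, I would simply run the same expansion backwards: assume the three relations and substitute them into the expression for $x(y\cdot yx)$ above. The coefficient of $x$ collapses via $\varphi+\psi^3 = 0$, the coefficient of $y$ collapses via $(\psi+\psi^2)\varphi = (\psi+\psi^2)(-\psi^3) = -\psi^4-\psi^5 = 1$, and the constant part vanishes by $\psi^2b+\psi b+b=0$, leaving exactly $y$.

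The calculation is essentially routine; the only real care points are (i) not tacitly assuming $\varphi\psi=\psi\varphi$ when expanding (one must keep the order $\psi\varphi$, $\psi^2\varphi$ intact until the relation $\varphi=I\psi^3$ is derived), and (ii) justifying the passage from ``this holds for all $x,y$'' to the three separate identities in $\mathrm{End}(Q,+)$, which is where the main obstacle, such as it is, lies — one needs that the maps $x\mapsto \alpha x$ and $x\mapsto c$ agree on $Q$ only when $\alpha = 0$ in the endomorphism ring and $c=0$ in $Q$, which follows at once by evaluating at $x=0$ and then at arbitrary $x$.
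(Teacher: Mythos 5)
Your proposal is correct and follows essentially the same route as the paper: expand $x(y\cdot yx)$ to $\varphi x + \psi\varphi y + \psi^2\varphi y + \psi^3 x + \psi^2 b + \psi b + b = y$, then specialise $x=y=0$, $y=0$, and $x=0$ to peel off the three conditions, and substitute back for the converse. The care points you flag (keeping $\psi\varphi$ unexpanded until $\varphi=I\psi^3$ is derived, and justifying the separation of coefficients by evaluation) are exactly the ones the paper handles, so nothing is missing.
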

\begin{proof}
We rewrite $T_2$-identity using the right part of the form (\ref{Form_of_T_quasigroup}) as follows:

\begin{equation} \label{Equality_9}
 \varphi x + \psi (\varphi y + \psi (\varphi y + \psi x + b)+ b) + b = y\\
\end{equation}
or, taking into consideration that $(Q,+)$ is an abelian group, $\varphi, \psi$ are its automorphisms after simplification of equality  (\ref{Equality_9}) we have
\begin{equation} \label{Equality_95}
 \varphi x +  \psi \varphi y + \psi^2 \varphi y + \psi^3  x  + \psi^2 b + \psi b + b = y.\\
\end{equation}
If we put in the equality (\ref{Equality_95}) $x= y =0$,
then we obtain
\begin{equation} \label{Equality_9125}
\psi^2 b + \psi b + b = 0, \\
\end{equation}
where $0$ is the identity (neutral) element
of the group $(Q, +)$.

Therefore we can rewrite equality (\ref{Equality_95}) in the following form
\begin{equation} \label{Equality_951}
 \varphi x +  \psi \varphi y + \psi^2 \varphi y + \psi^3  x  = y.\\
\end{equation}
If  we put in the equality (\ref{Equality_951}) $y=0$, then we obtain  that  $\varphi x +   \psi^3  x = 0$. Therefore $\varphi = I\psi^3$, where, as and above, $x + Ix = 0$ for all $x\in Q$.

Notice, in any abelian group $(Q, +)$ the map $I$ is an automorphism
 of this group. Really, $I(x+y) = Iy+Ix= Ix+Iy $.

 Moreover,  $I\alpha = \alpha I$ for any automorphism of the group $(Q, +)$. Indeed, $\alpha x + I\alpha x = 0$. From the other side $\alpha x + \alpha I x = \alpha (x + I x) = \alpha 0 =0$. Comparing the left sides we have $\alpha x + I\alpha x = \alpha x + \alpha I x $, $I\alpha x = \alpha I x $, i.e., $\alpha I = I\alpha$.

It is well known that $I^2 = \varepsilon$, i.e., $-(-x)= x$. Indeed, from equality $x+Ix =0$ using commutativity we have $Ix+ x =0$. From the other side $I(x+Ix)=0$, $Ix+I^2x=0$. Then $Ix+ x = Ix+I^2x$, $x= I^2x$ for all $x\in Q$.

If we put in the equality (\ref{Equality_951}) $x=0$, then we obtain  that
\begin{equation}\label{Equality_954}
 \psi \varphi y + \psi^2 \varphi y = y.
\end{equation}
 If we  substitute in the equality (\ref{Equality_954}) expression  $I\psi^3$ instead of $\varphi$, then we have
$I \psi^5  y + I \psi^4  y = y$, $\psi^5  y + \psi^4  y = Iy$,  $\psi^5  y + \psi^4  y + y = 0$. The last condition can be written in the form
$\psi^5   + \psi^4   + 1 = 0$, where $1$ is identity automorphism of the group $(Q, +)$ and $0$ is zero endomorphism of this group.

Converse. If we take into consideration  that $\psi^2 b + \psi b + b = 0$, then from  equality (\ref{Equality_95}) we obtain equality (\ref{Equality_951}). If we substitute in equality  (\ref{Equality_951})  the following equality $\varphi = I\psi^3$,
then we obtain  $\psi I\psi^3  y + \psi^2 I\psi^3 y  = y$, $\psi^4 I  y + \psi^5 I y  = y$ which is equivalent to the  equality $\psi^5 y + \psi^4 y + y = 0$. Therefore $T$-quasigroup $(Q, \cdot)$ is $T_2$-quasigroup.
\end{proof}
\begin{corollary}
Any $T$-$T_2$-quasigroup is medial.
\end{corollary}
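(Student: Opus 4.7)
The plan is to unpack both definitions and then observe that the commutation relation $\varphi\psi = \psi\varphi$ needed for mediality follows almost immediately from the normal form of $\varphi$ obtained in Theorem \ref{T_2-T_quas}, combined with the fact, established inside that proof, that the inversion automorphism $I$ commutes with every automorphism of an abelian group.

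Concretely, I would start from a $T$-quasigroup $(Q,\cdot)$ with $x\cdot y = \varphi x + \psi y + b$ satisfying the $T_2$-identity. By Theorem \ref{T_2-T_quas} we have $\varphi = I\psi^3$. To check mediality, I only need to verify $\varphi\psi = \psi\varphi$. Compute
\[
\varphi\psi = I\psi^3 \cdot \psi = I\psi^4,
\qquad
\psi\varphi = \psi \cdot I\psi^3.
\]
Now invoke the identity $\psi I = I\psi$, which was proved in the middle of the argument for Theorem \ref{T_2-T_quas} (for any automorphism $\alpha$ of an abelian group, $\alpha I = I\alpha$). Applying this once yields $\psi I\psi^3 = I\psi\cdot\psi^3 = I\psi^4$, so $\psi\varphi = I\psi^4 = \varphi\psi$.

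Since the pair of automorphisms defining the $T$-quasigroup commutes, the quasigroup is medial by definition. I do not foresee an obstacle: the entire content is packaged in the shape $\varphi = I\psi^3$ from the theorem and the commutativity of $I$ with automorphisms, both of which are already available in the text.
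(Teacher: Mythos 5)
Your proposal is correct and follows exactly the paper's route: the paper's one-line proof also rests on $\varphi = I\psi^3$ from Theorem \ref{T_2-T_quas}, and your computation $\varphi\psi = I\psi^4 = \psi I\psi^3 = \psi\varphi$ (using $I\alpha = \alpha I$, already established inside that theorem's proof) simply makes explicit what the paper leaves implicit.
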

\begin{proof}
The proof follows from equality $\varphi = I\psi^3$ (see Theorem \ref{T_2-T_quas}).
\end{proof}

\begin{corollary} \label{T_2-T_quas_}
A T-quasigroup $(Q, \cdot)$ of the form
\begin{equation} \label{Form_of_T_quasigroup+}
x\cdot y = \varphi x + \psi y
\end{equation}
 satisfies  $T_2$-identity if and only if $\varphi = I\psi^3$,   $\psi^5  + \psi^4  + 1 = 0$.
\end{corollary}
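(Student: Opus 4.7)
The plan is to derive Corollary \ref{T_2-T_quas_} as a direct specialization of Theorem \ref{T_2-T_quas} to the case $b = 0$. Indeed, the form (\ref{Form_of_T_quasigroup+}) is obtained from (\ref{Form_of_T_quasigroup}) by taking the constant $b$ to be the neutral element $0$ of $(Q, +)$.

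First, I would observe that when $b = 0$, the third condition of Theorem \ref{T_2-T_quas}, namely $\psi^2 b + \psi b + b = 0$, is automatic. This follows because $\psi$ is an automorphism of $(Q, +)$, so $\psi 0 = 0$ and hence $\psi^2 0 + \psi 0 + 0 = 0 + 0 + 0 = 0$. Therefore, among the three conditions produced by Theorem \ref{T_2-T_quas}, only the conditions $\varphi = I\psi^3$ and $\psi^5 + \psi^4 + 1 = 0$ remain as nontrivial constraints.

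For the forward direction, I would assume that the quasigroup $(Q, \cdot)$ defined by $x \cdot y = \varphi x + \psi y$ satisfies the $T_2$-identity; then applying Theorem \ref{T_2-T_quas} with $b = 0$ yields $\varphi = I\psi^3$ and $\psi^5 + \psi^4 + 1 = 0$. For the converse, assuming these two conditions hold, the third condition is automatic as explained, so Theorem \ref{T_2-T_quas} again applies and gives that $(Q, \cdot)$ is a $T_2$-quasigroup.

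There is essentially no obstacle here: the whole content is that the constant term $b = 0$ makes the condition $\psi^2 b + \psi b + b = 0$ vacuous. The corollary is therefore a one-line reduction to the previous theorem, and the proof needs no more than a pointer to Theorem \ref{T_2-T_quas} together with the remark that $\psi 0 = 0$.
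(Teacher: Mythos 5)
Your proposal is correct and is exactly the intended argument: the paper's own proof is just the remark ``It is easy to see,'' which can only mean the specialization of Theorem \ref{T_2-T_quas} to $b=0$, where the condition $\psi^2 b + \psi b + b = 0$ holds trivially. You have simply written out the one-line reduction the authors left implicit.
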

\begin{proof}
It is easy to see.
\end{proof}

\section{$T_2$-quasigroups from the rings of residues}

We  use rings of residues modulo $n$, say $(R, +, \cdot, 1)$,  and Theorem \ref{T_2-T_quas} to construct $T_2$-quasigroups. Here $(R, +)$ is cyclic group of order $n$, i.e., it is the group $(Z_n, +)$ with the generator element $1$. It is clear that in many cases the element $1$ is not a unique generator element, $(R, \cdot)$ is a commutative semigroup \cite{MALTSEV}.

Multiplication of an element  $b \in R$  on all  elements of the group $(R, +)$ induces an  endomorphism of the group $(R, +)$, i.e., $b\cdot(x+y) = b\cdot x + b\cdot y$. If $g.c.d. (b, n) = 1$, then the element $b$ induces an automorphism of the group $(R, +)$ and it is called an invertible element of the ring $(R, +, \cdot, 1)$.

Next theorem is a specification of Theorem  \ref{T_2-T_quas} on medial $T_2$-quasigroups defined using rings of residues modulo $n$.
We denote by the symbol  $\mathbb{Z}$  the set of integers,  we denote by  $|n|$  module of the number $n$.

\begin{theorem} \label{dividers_TH}
Let $(Z_r, +, \cdot, 1)$ be a ring of residues modulo $r$ such that $f(k) =(k^5+k^4+1)\equiv 0 \pmod{r}$ for some $k\in \mathbb Z$. If  $g.c.d. (|k|, r) =1$, $k^2 \cdot b + k\cdot b + b \equiv 0\pmod{r}$ for some $b\in Z_r$,  then there exists $T_2$-quasigroup $(Z_{r}, \circ)$ of the form $x\circ  y = - k^3 \cdot x + k \cdot y + b$.
\end{theorem}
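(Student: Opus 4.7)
The plan is to apply Theorem \ref{T_2-T_quas} to the abelian group $(Z_r, +)$ with automorphisms built from ring multiplication. Define $\psi : Z_r \to Z_r$ by $\psi(x) = k \cdot x$ and $\varphi : Z_r \to Z_r$ by $\varphi(x) = -k^3 \cdot x$. Because multiplication in the ring $(Z_r, +, \cdot, 1)$ is distributive over addition, both maps are endomorphisms of $(Z_r, +)$; the hypothesis $\gcd(|k|, r) = 1$ says that $k$ is a unit of the ring, hence so are $-k^3$ and every power of $k$, which upgrades these endomorphisms to automorphisms. Commutativity of ring multiplication gives $\varphi \psi = \psi \varphi$, and by construction $\varphi = I\psi^3$.

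Next I would verify the two algebraic conditions that appear in Theorem \ref{T_2-T_quas}. The identity $\psi^5 + \psi^4 + 1 = 0$ of endomorphisms of $(Z_r, +)$ unwinds to the requirement that $(k^5 + k^4 + 1) \cdot x \equiv 0 \pmod{r}$ for every $x \in Z_r$, which is immediate from the hypothesis $k^5 + k^4 + 1 \equiv 0 \pmod{r}$ (multiplying a congruence by an arbitrary integer preserves it). The condition $\psi^2 b + \psi b + b = 0$ likewise reduces to $k^2 \cdot b + k \cdot b + b \equiv 0 \pmod{r}$, which is the other standing hypothesis.

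With all hypotheses of Theorem \ref{T_2-T_quas} verified, the operation
\[
x \circ y = \varphi x + \psi y + b = -k^3 \cdot x + k \cdot y + b
\]
defines a $T$-quasigroup on $Z_r$ satisfying the $T_2$-identity, which is exactly the conclusion.

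The argument is essentially a specialization, so there is no deep obstacle; the only point requiring care is the passage between a scalar congruence in $(Z_r, +, \cdot, 1)$ and the corresponding identity of endomorphisms of the additive group, together with checking that $\gcd(|k|, r) = 1$ really suffices for $\psi$ (and all of its powers, and $-k^3$) to be an automorphism rather than merely an endomorphism.
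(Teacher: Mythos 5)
Your proposal is correct and follows exactly the same route as the paper: both apply Theorem \ref{T_2-T_quas} with $\psi$ given by multiplication by $k$ and $\varphi$ by multiplication by $-k^3$, using $\gcd(|k|,r)=1$ to upgrade these endomorphisms to automorphisms. Your version merely spells out the verification of the conditions $\varphi=I\psi^3$, $\psi^5+\psi^4+1=0$ and $\psi^2 b+\psi b+b=0$ that the paper leaves implicit.
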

\begin{proof}
We can use Theorem \ref{T_2-T_quas}.  The fact that $g.c.d. (|k|, r) =1$ guarantee that the multiplication on the number $k$ induces an automorphism of the group $(Z_{r},+)$. In this case the map $-k^3$ is also  a permutation as a product of permutations.
\end{proof}

\begin{example} \label{Sostavnoe_Chislo}
 Let $k = -3$. Then $f(-3) = (-3)^5 + (-3)^4 + 1 = -161= -(7)\cdot(23)$.
Therefore $-161 \equiv 0 \pmod{7}$ and $-161 \equiv 0 \pmod{23}$ and we have theoretical possibility to construct $T_2$ quasigroups of order $7, \, 23, 161$.

Case 1.
Let $r=7$. Then $k = -3 = 4 \pmod{7}$. In this case $-(k^3) =  -(-3)^3 =  27= 6 \pmod{7}$. It is clear that the elements $6$ and $4$ are invertible elements of the ring $(Z_{7}, +, \cdot, 1)$. Therefore quasigroup $(Z_{7}, \ast)$  with the form $x\ast y = 6\cdot x + 4\cdot y$ is $T_2$-quasigroup of order $7$.

Check. We have $6x+4(6y+4(6y+4x))=y$, $70x+24 y + 96 y  = y$, $y= y$, since $70\equiv 0 \pmod{7}, 120\equiv 1 \pmod{7}$.

In order to construct $T_2$-quasigroups over the ring $(Z_7, +, \cdot, 1)$ with non-zero element $b$ we must  solve congruence
$(-3)^2 \cdot b + (-3)\cdot b + b \equiv 0\pmod{7}$. We have $7 \cdot b \equiv 0\pmod{7}$. The last equation is true for any possible value of the element $b$. Therefore the following quasigroups are $T_2$-quasigroups of order $7$: $x\circ y = 6\cdot x + 4\cdot y + i$, for any $i\in \{1, 2, \dots, 5, 6 \}$.

Case 2.
Let $r = 23$. Then $k = -3 = 20 \pmod{23}$. In this case $-(k^3) = -(-3)^3 = 27=4 \pmod{23}$. It is clear that the elements $20$ and $4$ are invertible elements of the ring $(Z_{23}, +, \cdot, 1)$. Therefore quasigroup $(Z_{23}, \ast)$  with the form $x\ast y = 4\cdot x + 20\cdot y$ is $T_2$-quasigroup of order $23$.

Check. We have $4x+20(4y+20(4y+20x))=y$, $4x+80y +1600 y + 8000 x = y$, $y=y$, since $8004\equiv 0 \pmod{23}, 1680\equiv 1 \pmod{23}$.

In order to construct $T_2$-quasigroups over the ring $(Z_{23}, +, \cdot, 1)$ with non-zero element $b$ we must  solve congruence
$(-3)^2 \cdot b + (-3)\cdot b + b \equiv 0\pmod{23}$. We have $7 \cdot b \equiv 0\pmod{23}$. This congruence modulo has unique solution $b\equiv 0 \bmod{23}$, since $g.c.d. (7, 23) = 1$.

Case 3.
Let $r=161$. Then $k = -3 = 158 \pmod{161}$. Recall the number $161$ is not prime. In this case $-(k)^3 = -(-3)^3 = 27 \pmod{161}$, $g.c.d. (27, 161) = 1$, the elements $158$ and $27$ are invertible elements of the ring $(Z_{161}, +, \cdot, 1)$. Therefore quasigroup $(Z_{161}, \circ)$  with the form $x\circ y = 27\cdot x + 158\cdot y$ is medial $T_2$-quasigroup of order $161$.

Check. $27 x+ 4266 y + 674028 y + 3944312 x = y$, $y=y$, since $3944339 \equiv 0 \pmod{161}$, $678294\equiv 1 \pmod{161}$.

In order to construct $T_2$-quasigroups over the ring $(Z_7, +, \cdot, 1)$ with non-zero element $b$ we must  solve congruence
 $7 \cdot b \equiv 0\pmod{161}$.  It is clear that $g.c.d. (7, 161) = 7$. Therefore this congruence has 6 non-zero solutions, namely, $b \in \{23, 46, 69, 92, 115, 138\} = D$.

 The following quasigroups are $T_2$-quasigroups of order $161$: $x\circ y = 27\cdot x + 158\cdot y + i$, for any $i\in D$.
\end{example}

\begin{example}\label{Dvadtsat_Otrits}
We list some values of the polynomial $f$:

\begin{align*}
& f(-20) = -3039999, f(-19) = -2345777, f(-18) = -1784591, f(-17) = -1336335, \\
& f(-16) = -983039, f(-15) = -708749, f(-14) = -499407,  f(-13) = -342731,   \\
& f(-12) = -228095, f(-11) = -146409, f(-10) = -89999, f(-9) = -52487,  \\
& f(-8) = -28671, f(-7) = -14405, f(-6) = -6479, f(-5) = -2499, f(-4) = -767, \\
&  f(-3) = -161, f(-2) = -15,  f(-1) = 1,  f(1) =3, f(2) =49, \\
& f(3)=325, f(4) = 1281, f(5) =3751, f(6) =9073, f(7) =19209, \\
&  f(8) =36865, f(9) =65611, f(10) =110001, f(11) =175693, f(12) =269569, \\
&  f(13) =399855, f(14) =576241, f(15) =810001, f(12) =269569, \\
&  f(17) =1503379,  f(18) =1994545,  f(19) =2606421, f(20) =3360001.
\end{align*}

The set of prime divisors  of the numbers of the set $\{f(-20), f(-19), \dots, f(-1), $ $ f(1), \dots, f(20) \}$ contains  the following primes
\begin{align*}
& \{ 3, 5, 7, 13, 19, 23, 37, 43, 59, 61, 73, 101, 157, 211, 241, 307, 341, 347, 421,  503, 719, 833, 977,\\
& 979, 1163, 1319, 2183, 2881, 3359, 3751, 5047,  5813, 6403, 7373,  9073,  10033, 25099,  36667, 166469, \\
& 269569, 868807, 1503379 \}.
\end{align*}
\end{example}
We can use  presented  numbers for  construction of  $T_2$-quasigroups over the rings of residues.

In order to construct $T_2$-quasigroups it is possible to use direct products of $T_2$-quasigroups. It is clear  that direct product of  $T_2$-quasigroups is a $T_2$-quasigroup.

It is possible to use and the following arguments. Class of $T_2$ quasigroups  is defined using $T_2$-identity and it forms a variety in signature with three binary operations, namely, with the operations $\cdot$, $\slash$, and $\backslash$ \cite{MALTSEV}. It is known that any variety is closed relatively to the operator  of the direct product \cite{MALTSEV}.

\section{Examples of $T_2$-quasigroups} \label{Section_Examples of $T_2$}

Using Mace4 \cite{MAC_CUNE_MACE} we construct the following examples of $T_2$-quasigroups.
\[
\begin{array}{lll}
\begin{tabular}{r|rrr}
$\ast$ & 0 & 1 & 2\\
\hline
    0 & 0 & 1 & 2 \\
    1 & 2 & 0 & 1 \\
    2 & 1 & 2 & 0
\end{tabular}
&
\begin{tabular}{r|rrrrr}
$\circ$ & 0 & 1 & 2 & 3 & 4\\
\hline
    0 & 0 & 2 & 4 & 1 & 3 \\
    1 & 2 & 1 & 3 & 4 & 0 \\
    2 & 4 & 3 & 2 & 0 & 1 \\
    3 & 1 & 4 & 0 & 3 & 2 \\
    4 & 3 & 0 & 1 & 2 & 4
\end{tabular}
&
\begin{tabular}{r|rrrrrrr}
$\star$ & 0 & 1 & 2 & 3 & 4 & 5 & 6\\
\hline
    0 & 0 & 2 & 3 & 1 & 6 & 4 & 5 \\
    1 & 6 & 1 & 4 & 0 & 5 & 2 & 3 \\
    2 & 5 & 0 & 2 & 6 & 3 & 1 & 4 \\
    3 & 4 & 5 & 0 & 3 & 2 & 6 & 1 \\
    4 & 3 & 6 & 1 & 5 & 4 & 0 & 2 \\
    5 & 2 & 3 & 6 & 4 & 1 & 5 & 0 \\
    6 & 1 & 4 & 5 & 2 & 0 & 3 & 6
\end{tabular}
\end{array}
\]

\[
\begin{array}{ll}
\begin{tabular}{r|rrrrrrrr}
$\diamond$ & 0 & 1 & 2 & 3 & 4 & 5 & 6 & 7\\
\hline
    0 & 0 & 2 & 7 & 1 & 5 & 6 & 3 & 4 \\
    1 & 4 & 1 & 6 & 2 & 3 & 7 & 5 & 0 \\
    2 & 5 & 7 & 2 & 6 & 0 & 1 & 4 & 3 \\
    3 & 7 & 5 & 0 & 3 & 2 & 4 & 1 & 6 \\
    4 & 3 & 6 & 1 & 7 & 4 & 2 & 0 & 5 \\
    5 & 1 & 4 & 3 & 0 & 6 & 5 & 7 & 2 \\
    6 & 2 & 0 & 5 & 4 & 7 & 3 & 6 & 1 \\
    7 & 6 & 3 & 4 & 5 & 1 & 0 & 2 & 7
\end{tabular}
&
\begin{tabular}{r|rrrrrrrrrrr}
$\bullet$ & 0 & 1 & 2 & 3 & 4 & 5 & 6 & 7 & 8 & 9 & 10\\
\hline
    0 & 0 & 2 & 7 & 1 & 6 & 9 & 10 & 8 & 3 & 4 & 5 \\
    1 & 10 & 1 & 9 & 6 & 8 & 4 & 7 & 0 & 2 & 5 & 3 \\
    2 & 5 & 10 & 2 & 7 & 9 & 1 & 3 & 4 & 0 & 6 & 8 \\
    3 & 6 & 5 & 0 & 3 & 2 & 10 & 8 & 1 & 4 & 7 & 9 \\
    4 & 3 & 7 & 1 & 9 & 4 & 6 & 2 & 5 & 10 & 8 & 0 \\
    5 & 7 & 6 & 4 & 8 & 0 & 5 & 9 & 10 & 1 & 3 & 2 \\
    6 & 1 & 4 & 8 & 5 & 3 & 0 & 6 & 2 & 9 & 10 & 7 \\
    7 & 9 & 8 & 5 & 0 & 10 & 3 & 4 & 7 & 6 & 2 & 1 \\
    8 & 4 & 0 & 3 & 10 & 7 & 2 & 5 & 9 & 8 & 1 & 6 \\
    9 & 8 & 3 & 10 & 2 & 1 & 7 & 0 & 6 & 5 & 9 & 4 \\
    10 & 2 & 9 & 6 & 4 & 5 & 8 & 1 & 3 & 7 & 0 & 10
\end{tabular}
\end{array}
\]

\bigskip

\[
\begin{array}{lcr}
\begin{tabular}{r|rrrr}
$\boxtimes$ & 0 & 1 & 2 & 3\\
\hline
    0 & 0 & 2 & 3 & 1 \\
    1 & 1 & 3 & 2 & 0 \\
    2 & 2 & 0 & 1 & 3 \\
    3 & 3 & 1 & 0 & 2
\end{tabular}
&
\begin{tabular}{r|rrrrrrr}
$\boxdot$ & 0 & 1 & 2 & 3 & 4 & 5 & 6\\
\hline
    0 & 1 & 2 & 0 & 5 & 4 & 6 & 3 \\
    1 & 0 & 3 & 6 & 4 & 1 & 2 & 5 \\
    2 & 5 & 0 & 4 & 2 & 3 & 1 & 6 \\
    3 & 3 & 1 & 5 & 6 & 2 & 4 & 0 \\
    4 & 6 & 5 & 2 & 1 & 0 & 3 & 4 \\
    5 & 2 & 4 & 3 & 0 & 6 & 5 & 1 \\
    6 & 4 & 6 & 1 & 3 & 5 & 0 & 2
\end{tabular}
&
\begin{tabular}{r|rrrrrrrrr}
$\boxplus$ & 0 & 1 & 2 & 3 & 4 & 5 & 6 & 7 & 8\\
\hline
    0 & 1 & 2 & 0 & 3 & 4 & 5 & 6 & 7 & 8 \\
    1 & 0 & 1 & 2 & 8 & 6 & 4 & 5 & 3 & 7 \\
    2 & 2 & 0 & 1 & 7 & 5 & 6 & 4 & 8 & 3 \\
    3 & 4 & 5 & 6 & 0 & 3 & 7 & 8 & 1 & 2 \\
    4 & 3 & 7 & 8 & 4 & 0 & 1 & 2 & 5 & 6 \\
    5 & 8 & 3 & 7 & 6 & 2 & 0 & 1 & 4 & 5 \\
    6 & 7 & 8 & 3 & 5 & 1 & 2 & 0 & 6 & 4 \\
    7 & 6 & 4 & 5 & 2 & 8 & 3 & 7 & 0 & 1 \\
    8 & 5 & 6 & 4 & 1 & 7 & 8 & 3 & 2 & 0
\end{tabular}
\end{array}
\]

\[
\begin{tabular}{r|rrrrrrrrrrr}
$\boxminus$ & 0 & 1 & 2 & 3 & 4 & 5 & 6 & 7 & 8 & 9 & 10\\
\hline
    0 & 0 & 2 & 4 & 1 & 5 & 3 & 7 & 9 & 6 & 10 & 8 \\
    1 & 8 & 1 & 7 & 10 & 0 & 2 & 9 & 6 & 3 & 5 & 4 \\
    2 & 6 & 8 & 2 & 4 & 9 & 0 & 1 & 10 & 5 & 7 & 3 \\
    3 & 10 & 6 & 0 & 3 & 1 & 9 & 8 & 4 & 7 & 2 & 5 \\
    4 & 7 & 5 & 6 & 0 & 4 & 10 & 3 & 2 & 1 & 8 & 9 \\
    5 & 9 & 0 & 3 & 8 & 7 & 5 & 2 & 1 & 10 & 4 & 6 \\
    6 & 4 & 10 & 9 & 5 & 8 & 1 & 6 & 3 & 2 & 0 & 7 \\
    7 & 5 & 3 & 8 & 2 & 10 & 6 & 4 & 7 & 9 & 1 & 0 \\
    8 & 2 & 7 & 10 & 9 & 3 & 4 & 5 & 0 & 8 & 6 & 1 \\
    9 & 3 & 4 & 1 & 7 & 6 & 8 & 10 & 5 & 0 & 9 & 2 \\
    10 & 1 & 9 & 5 & 6 & 2 & 7 & 0 & 8 & 4 & 3 & 10
\end{tabular}
\]

\begin{lemma}
 There exist $T_2$-quasigroups of order $2^{\,k}$ for any  $k\geq 2$.
\end{lemma}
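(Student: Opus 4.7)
The strategy is to construct $T_2$-quasigroups of the two ``primitive'' orders $4$ and $8$ and then assemble all orders $2^k$ with $k \geq 2$ via direct products, using that the variety of $T_2$-quasigroups is closed under direct product (as noted immediately after Example \ref{Dvadtsat_Otrits}).

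For the small cases I would apply Corollary \ref{T_2-T_quas_} to the additive groups of $\mathbb{F}_4$ and $\mathbb{F}_8$. In characteristic $2$ the involution $I$ acts as the identity, so it suffices to exhibit an additive automorphism $\psi$ satisfying $\psi^5 + \psi^4 + 1 = 0$; then $x \cdot y = \psi^3 x + \psi y$ is a $T_2$-quasigroup. The factorization
\[
x^5 + x^4 + 1 = (x^2 + x + 1)(x^3 + x + 1)
\]
over $\mathbb{F}_2$ does the job: the first factor is the minimal polynomial of a primitive element of $\mathbb{F}_4$, and the second is irreducible with a root generating $\mathbb{F}_8$. Multiplication by such a root is in each case an $\mathbb{F}_2$-linear (hence additive) automorphism that annihilates $x^5 + x^4 + 1$, producing $T_2$-quasigroups $Q_4$ and $Q_8$ of orders $4$ and $8$. (Alternatively, the tables $\boxtimes$ and $\diamond$ in Section \ref{Section_Examples of $T_2$} already witness these orders.)

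To finish, I combine the two building blocks: for any $a, b \geq 0$ the direct product $Q_4^{\,a} \times Q_8^{\,b}$ is a $T_2$-quasigroup of order $4^a \cdot 8^b = 2^{2a + 3b}$. Every integer $k \geq 2$ can be written in the form $k = 2a + 3b$ with $a, b \geq 0$ (take $b = 0$, $a = k/2$ for $k$ even, and $b = 1$, $a = (k-3)/2$ for $k$ odd, $k \geq 3$), so all required orders are covered. There is no real obstacle here: the only non-routine input is the factorization above, after which the result reduces to the already-established Corollary \ref{T_2-T_quas_}, closure of the variety under direct products, and an elementary numerical observation.
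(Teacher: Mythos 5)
Your proof is correct and follows essentially the same route as the paper: the paper likewise takes the $T_2$-quasigroups of orders $2^2$ and $2^3$ (the tables $\boxtimes$ and $\diamond$), observes that $g.c.d.(2,3)=1$ so every $k\geq 2$ is of the form $2a+3b$ with $a,b\geq 0$, and invokes closure of the variety under direct products. The one genuine addition on your side is the self-contained construction of the two building blocks from the factorization $x^5+x^4+1=(x^2+x+1)(x^3+x+1)$ over $\mathbb{F}_2$ together with Corollary \ref{T_2-T_quas_}, which replaces the paper's reliance on Mace4-generated tables and also explains \emph{why} orders $4$ and $8$ occur; this is a worthwhile refinement but not a different proof.
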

\begin{proof}
Since $T_2$-quasigroup with the operation $\boxtimes$
has the order $2^2$ and $T_2$-quasigroup with the operation $\diamond$ has the order $2^3$,  $g.c.d. (2,3) = 1$.
\end{proof}

\begin{example}
There exist $T_2$-quasigroup of order $2^{11}$ since $11 = 2\cdot 1 + 3\cdot 3$.
\end{example}

It is easy to check that does not exist $T_2$-quasigroups of order 2.

\section{Spectra of idempotent $T_2$-quasigroups}

\begin{definition}
A pairwise balanced design (or PBD) is a set $X$ together with a family of subsets of $X$ (which need not have the same size and may contain repeats) such that every pair of distinct elements of $X$ is contained in exactly $\lambda$ (a positive integer) subsets. The set $X$ is allowed to be one of the subsets, and if all the subsets are copies of $X$, the PBD is called trivial. The size of $X$ is $v$ and the number of subsets in the family (counted with multiplicity) is $b$ \cite{WIKI_16}. \index{PBD}
\end{definition}

Theorem \ref{Wilson Theorem} was proved by  Richard M. Wilson. \index{theorem!Wilson R.M.}

\begin{theorem}  \label{Wilson Theorem}
Let $K =\{ k_1, k_2, \dots \}$  be a set of positive integers. Let
$\alpha (K) = g.c.d. (k(k - 1) \, | \,  k \in  K)$ and $\beta (K) = g.c.d. (k - 1 \, | \, k \in K) $.
Then there is an integer $v_0$  such that $v\geq v_0$ and if $v(v-1)\equiv 0 \pmod{\alpha (K)}$ and
$v-1\equiv 0 \pmod{\beta(K)}$, then there is a pairwise balanced block design with $\lambda  = 1$,
order $v$ and block sizes $k_1, k_2, \dots$ \cite{WILSON_Rich, CCL_80}.
\end{theorem}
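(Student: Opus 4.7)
The plan is to follow Wilson's original inductive strategy, which establishes that the obvious divisibility conditions on $v$ are asymptotically sufficient for the existence of a PBD with block sizes in $K$. First I would introduce the notion of a PBD-closed set: a subset $B \subseteq \mathbb{Z}_{>0}$ is PBD-closed if the existence of a PBD of order $v$ with block sizes in $B$ forces $v \in B$. The goal becomes to show that the set of admissible orders contains every sufficiently large integer satisfying the two congruences, noting that the conditions $v(v-1)\equiv 0 \pmod{\alpha(K)}$ and $v-1\equiv 0 \pmod{\beta(K)}$ are precisely the necessary arithmetic constraints inherited from the replication number and the total block count at each point.

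The machinery rests on three ingredients. The first is the asymptotic existence of transversal designs $\mathrm{TD}(k,n)$, equivalently $k-2$ mutually orthogonal Latin squares of order $n$, for every sufficiently large $n$; this follows by combining MacNeish-type product constructions with classical number-theoretic estimates for MOLS. The second is Wilson's Fundamental Construction for group divisible designs, which takes a ``master'' GDD, assigns weights to its points, and replaces each block by an ingredient GDD on the weighted points. The third is a supply of small seed PBDs and GDDs whose orders satisfy the prescribed congruences, constructed by hand or by algebraic means such as difference families over cyclic groups.

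To carry out the induction, for a given admissible $v$ I would write $v = mt + u$ with $m$ the order of a suitable master transversal design, $t$ a group size, and $u$ a small adjustment, chosen so that the induced local parameters stay within the admissible class. Filling the groups and the adjoined points with smaller PBDs furnished by the inductive hypothesis then produces the required design of order $v$. Iterating on $v$ and exploiting the Chinese remainder structure of the admissibility congruences yields every sufficiently large admissible order.

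The main obstacle, and the reason the theorem is genuinely deep, is proving that the inductive step succeeds for \emph{every} sufficiently large $v$ in the correct residue class rather than merely infinitely often. This forces one to control the divisibility conditions through each application of the Fundamental Construction while simultaneously guaranteeing a dense enough supply of ingredient TDs and seed designs. It is this density argument that makes the bound $v_0$ intrinsically non-constructive, so producing an explicit $v_0$ for a given $K$ is a separate hard problem that I would not attempt within this proof plan.
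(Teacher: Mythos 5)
The paper contains no proof of this statement: Theorem \ref{Wilson Theorem} is simply quoted as Wilson's existence theorem and attributed to \cite{WILSON_Rich, CCL_80}, so there is no in-paper argument against which to measure yours. On its own terms, your outline does correctly reproduce the architecture of Wilson's actual proof --- PBD-closed sets, the asymptotic existence of transversal designs $\mathrm{TD}(k,n)$ (equivalently $k-2$ MOLS of order $n$), Wilson's Fundamental Construction with weights, a stock of seed designs, and an induction over admissible orders --- and you correctly identify the congruences on $\alpha(K)$ and $\beta(K)$ as the necessary local conditions coming from the replication number and pair count.

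That said, what you have written is a roadmap, not a proof, and the gap is not cosmetic. Each of your three ``ingredients'' is itself a substantial theorem (the asymptotic existence of $\mathrm{TD}(k,n)$ for all large $n$ rests on Chowla--Erd\H{o}s--Straus type estimates and is far from a routine MacNeish product argument), and the decomposition $v = mt + u$ is precisely where the whole difficulty of the theorem is concentrated: one must show that the set of orders realizable from $K$ is PBD-closed and eventually contains the \emph{entire} admissible residue class, not just infinitely many members of it, and this closure-and-density argument occupies the bulk of Wilson's series of papers. You acknowledge as much by explicitly declining to carry out that step. So the proposal should be read as a correct identification of the known proof strategy rather than a self-contained proof; for the purposes of the present paper this is harmless, since the authors likewise import the result purely by citation and only use it, together with Theorem \ref{Models_Of_Quas}, to deduce Theorem \ref{Idempot_T2_quas}.
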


The following theorem establishes a connection between some quasigroups and block designs.
\begin{theorem} \label{Models_Of_Quas}
Let $V$ be a variety (more generally universal class) of algebras which is idempotent and which is based on
$2$-variable identities. If there are models of $V$ of orders $k_1, k_2, \dots$  and if there is a
pairwise balanced block design with $\lambda  = 1$, order $v$, and blocks of sizes $k_1, k_2, \dots$,
then there is a model of $V$ of order $v$ \cite{CCL_80}.
\end{theorem}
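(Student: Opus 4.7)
The plan is to apply the standard PBD-closure construction. Let $(X,\mathcal{B})$ be the given pairwise balanced design with $|X|=v$, every pair of distinct points lying in exactly one block, and block sizes drawn from $\{k_1,k_2,\dots\}$. The first step is to equip each block $B\in\mathcal{B}$ with a $V$-model structure: since $V$ has a model of order $|B|$ by hypothesis, I would transport such a structure onto $B$ via an arbitrary bijection. All basic operations of $V$ are then defined on $B$ and return values inside $B$.

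Next I would glue these local structures into a single algebra on $X$. Because $\lambda=1$, any two distinct elements $x,y\in X$ lie in a unique block $B(x,y)$, so for every basic operation $f$ in the signature of $V$ I would define $f(x,y)$ using the $V$-structure placed on $B(x,y)$. On the diagonal I would set $f(x,x)=x$ for every $f$; this is consistent, because the $V$-model on any block through $x$ is idempotent and already forces $f(x,x)=x$ there.

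It remains to verify that the resulting algebra on $X$ belongs to $V$. Since $V$ is based on $2$-variable identities, it suffices to check each defining identity $p(x,y)=q(x,y)$ at an arbitrary pair $x,y\in X$. If $x=y$, idempotency of $V$ forces both sides to equal $x$. If $x\neq y$, set $B=B(x,y)$; because $B$ is closed under every basic operation, every subterm of $p(x,y)$ and $q(x,y)$ evaluated in the global algebra stays inside $B$ and coincides with the evaluation inside the $V$-model on $B$. Since $B\models V$, the identity is satisfied at $(x,y)$.

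The key obstacle — and really the only nontrivial point — is precisely this closure claim: one must be sure that when a two-variable term is evaluated, no intermediate value escapes the block $B(x,y)$, since outside $B$ the operations are controlled by different blocks selected by different pairs of inputs. The combination of the $\lambda=1$ condition, the restriction to two-variable identities, and idempotency is exactly what keeps the whole evaluation trapped inside a single block, after which satisfaction of the identity is immediate.
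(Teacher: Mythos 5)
Your argument is correct, and it is the standard PBD-closure construction; note that the paper itself does not prove this theorem at all --- it only cites \cite{CCL_80} (the argument goes back to Stein \cite{STEIN_64}) --- so you have supplied the proof the paper omits. The one step you flag as ``the key obstacle'' but never quite discharge in so many words is in fact immediate and deserves its explicit sentence: if $u\neq w$ both lie in a block $B$, then since $\lambda=1$ the \emph{unique} block containing the pair $\{u,w\}$ must be $B$ itself, so the globally defined $f(u,w)$ is computed in the $V$-structure on $B$ and lands in $B$; together with $f(u,u)=u\in B$ this shows each block is a subalgebra on which the global operations coincide with the transported $V$-model, and the evaluation of any two-variable term at $(x,y)$ then proceeds entirely inside $B(x,y)$ by induction on subterms. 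With that sentence added your proof is complete. (One tacit assumption worth stating, which matches the intended application to $T_2$-quasigroups in the signature $\cdot$, $\slash$, $\backslash$: the basic operations are at most binary, since an operation of higher arity could not be defined blockwise when its arguments fail to lie in a common block.)
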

See also \cite{STEIN_64}[Section 4], \cite{Pelling_78}.

\begin{theorem} \label{Idempot_T2_quas}
Idempotent $T_2$-quasigroups there exist for any big $v$ such that $v \geq v_0$.
\end{theorem}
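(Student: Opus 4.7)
The plan is to combine Wilson's theorem (Theorem \ref{Wilson Theorem}) with the block-design construction principle for idempotent varieties (Theorem \ref{Models_Of_Quas}). Since $T_2$-quasigroups form a variety in the signature $(\cdot, /, \backslash)$ defined by the 2-variable identity $x(y\cdot yx)=y$ together with the 2-variable quasigroup axioms, and idempotency $x\cdot x=x$ is itself a 1-variable (hence 2-variable) identity, the variety of idempotent $T_2$-quasigroups is idempotent and based on 2-variable identities. Therefore Theorem \ref{Models_Of_Quas} applies.

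First I would extract from the tables in Section \ref{Section_Examples of $T_2$} a set $K$ of orders for which an idempotent $T_2$-quasigroup is known to exist. Checking the diagonals, the tables for $\circ$, $\star$, $\diamond$, and $\bullet$ are idempotent, giving $K=\{5,7,8,11\}$. Next I would compute the two invariants appearing in Wilson's theorem:
\[
\alpha(K)=\gcd(5\cdot 4,\,7\cdot 6,\,8\cdot 7,\,11\cdot 10)=\gcd(20,42,56,110)=2,
\]
\[
\beta(K)=\gcd(4,6,7,10)=1.
\]
Thus the divisibility constraints $v(v-1)\equiv 0\pmod{\alpha(K)}$ and $v-1\equiv 0\pmod{\beta(K)}$ reduce to $v(v-1)\equiv 0\pmod 2$ and the empty condition modulo $1$, the former of which is automatic.

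By Theorem \ref{Wilson Theorem} there exists a threshold $v_0$ such that for every $v\geq v_0$ a pairwise balanced design with $\lambda=1$, order $v$ and block sizes in $K$ exists. Plugging this PBD into Theorem \ref{Models_Of_Quas}, together with the idempotent $T_2$-quasigroups of orders $5,7,8,11$ already exhibited, yields an idempotent $T_2$-quasigroup of order $v$.

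The only real obstacle is the preliminary verification that the small examples are in fact idempotent $T_2$-quasigroups (which is a direct inspection of the diagonals and a finite check of the identity, the latter produced by Mace4), and the observation that idempotency is preserved by the PBD construction of Theorem \ref{Models_Of_Quas}. Once $K$ with $\gcd$-invariants $(2,1)$ is in hand, the Wilson--CCL machinery delivers the conclusion directly.
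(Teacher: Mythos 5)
Your proposal is correct and follows essentially the same route as the paper: extract the idempotent examples of orders $5,7,8,11$ from Section \ref{Section_Examples of $T_2$}, compute $\alpha(K)=2$ and $\beta(K)=1$, observe that the resulting congruences hold for all $v\geq 2$, and then combine Theorem \ref{Wilson Theorem} with Theorem \ref{Models_Of_Quas}. Your additional remarks verifying that the variety of idempotent $T_2$-quasigroups is idempotent and based on $2$-variable identities only make explicit what the paper leaves implicit.
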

\begin{proof}
The proof is based on  Theorems  \ref{Wilson Theorem} and  \ref{Models_Of_Quas}. From Section \ref{Section_Examples of $T_2$}
it follows that there exist idempotent $T_2$-quasigroups of orders 5, 7, 8, 11.
Therefore $\alpha (K) =g.c.d.(20, 42, 56, 110) = 2$, $\beta (K) =g.c.d.(4, 6, 7, 10) = 1$.

Then  congruences $v(v-1)\equiv 0 \pmod{\alpha (K)}$ and $v-1\equiv 0 \pmod{\beta(K)}$ take the form
$v(v-1)\equiv 0 \pmod{2}$ and $v-1\equiv 0 \pmod{1}$. Easy to see that any positive  $v\geq 2$ satisfies these congruences.

Using Theorems  \ref{Wilson Theorem} and  \ref{Models_Of_Quas} we conclude that idempotent $T_2$-quasigroups there exist for any big $v$ such that $v \geq v_0$.
\end{proof}

The problem of the  estimation  of the number $v_0$ from Theorem  \ref{Idempot_T2_quas} is important.
For comparison it is known  that for Stein's quasigroups (quasigroups with identity (\ref{Steins 1st})) $v_0\leq 1042$  \cite{Pelling_78}.

\noindent \footnotesize{
A.V. Scerbacova\\
Gubkin Russian State Oil and Gas University \\
119991, Moscow, Leninsky Prospect, 65 \\
 Russia \\
E-mail: \emph{ E-mail: 	\emph{scerbik33@yandex.ru} }}

\medskip

\noindent \footnotesize{
V.A. Shcherbacov\\
Institute of Mathematics and Computer Science \\
Academy of Sciences of Moldova\\
MD-2028, str. Academiei, 5, Chisinau \\
Moldova \\
E-mail: \emph{ E-mail: 	\emph{scerb@math.md}}}

\end{document}